\numberwithin{equation}{section}
\newtheorem{theorem}{Theorem}[section]
\newtheorem{proposition}{Proposition}[section]
\newtheorem{lemma}{Lemma}[section]
\DeclareMathOperator{\dist}{d}
\begin{document}

\title{New Results for the Two-Stage Contact Process}

%\titlerunning{Two-Stage Contact Process}

\author{Eric Foxall}

%\authorrunning{Short form of author list} % if too long for running head
%\institute{
     %         Department of Mathematics and Statistics, University of Victoria \\
	%		  PO BOX 3060 STN CSC Victoria, B.C. Canada V8W 3R4 \\
 %             Tel.: (250) 208-2283\\
%              \email{e.t.foxall@gmail.com}
%}

%\date{Received: date / Accepted: date}
% The correct dates will be entered by the editor

\maketitle

\begin{abstract}
Here we continue the work started by Steve Krone on the two-stage contact process.  We give a simplified proof of the duality relation, and answer most of the open questions posed in that paper.  We also fill in the details of an incomplete proof.
\end{abstract}
\small{\textbf{Keywords:} contact process, interacting particle systems}\\
\small{\textbf{MSC 2010:} 60J25, 92B99}
%\subclass{60J25 \and 92B99}

\section{Introduction}
We consider the two-stage contact process introduced in \cite{krone}.  It is a natural generalization of the contact process in which there is an intermediate juvenile type that must mature before it can produce offspring.  More precisely, it is a growth model on $\mathbb{Z}^d$ defined by the rates
\begin{eqnarray*}
0 & \rightarrow & 1 \textrm{ at rate } \lambda n_2\\
1 &\rightarrow & 2 \textrm{ at rate } \gamma\\
1 &\rightarrow & 0 \textrm{ at rate } 1 + \delta\\
2 &\rightarrow & 0 \textrm{ at rate } 1
\end{eqnarray*}
where $n_2(x)$ is the cardinality of the set $\{y \in \mathbb{Z}^d:0<\|y-x\|_{\infty} < r\}$ for some $r \geq 1$.  The state space for the process is $\smash{\{0,1,2\}^{Z^d}}$, so that each site is either unoccupied, recently occupied, or occupied by a mature organism that can give birth at other sites.  Aside from the choice of neighbourhood, there are three parameters $\lambda,\gamma$ and $\delta$, respectively the transmission rate, the maturation rate, and the juvenile death rate.\\

A number of basic properties of the process are proved in \cite{krone}, including additivity and monotonicity with respect to parameters (increasing in $\lambda$ and $\gamma$ and decreasing in $\delta$), as well as a duality relation, and some bounds on the survival region (the set of parameters for which an initially finite population has a chance of surviving for all time).\\

Here we consider the process in the more general setting of a countable graph $(V,E)$ with finite maximum degree, proving some results in this setting and more precise results on $\mathbb{Z}^d$.  We simplify the proof of the duality relation given in \cite{krone} and answer most of the open questions posed in Section 4 of that paper.  As we shall see, for the two-stage contact process there is a critical value of the maturation rate below which survival does not occur (Theorem \ref{thmcritmat}).  Also, it shares many of the properties of the contact process; in particular, there is complete convergence (Theorem \ref{thmcomp}).  The following is a summary of the main results.\\

Our first result is an upper bound on the set of values $\gamma$ so that the process dies out, i.e., reaches the all $0$ state with probability $1$.  The bound depends only on the maximum degree $M = \max_x \deg x$ of the graph.  
\begin{theorem}\label{thmcritmat}
If $\gamma < 1/(2M-1)$ then starting from any finite number of occupied sites, the process dies out, no matter the value of $\lambda$ and $\delta$.
\end{theorem}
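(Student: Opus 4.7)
My plan: dominate the set of mature individuals (state $2$) by a subcritical branching process and then invoke the a.s.\ extinction of subcritical branching processes.

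Every $2$ that appears after time $0$ arises via maturation of a $1$, and that $1$ was produced by a unique birth event from some neighboring $2$; I declare that neighbor to be the parent of the new $2$. This assigns to every non-initial $2$ a unique parent and produces a rooted forest whose roots are the initial $2$'s together with the initial $1$'s (each of which has probability at most $\rho := \gamma/(\gamma+1+\delta)$ of ever maturing into a $2$). Since the initial configuration is finite, there are only finitely many roots, so it suffices to show that each root has a.s.\ finitely many mature descendants.

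Next I upper-bound the expected number of direct offspring of a single $2$ at site $x$. Let $T \sim \mathrm{Exp}(1)$ be its lifetime. For each neighbor $y$, let $K_y$ be the number of birth events from $x$ to $y$ during $T$ that successfully create a $1$ at $y$ (i.e., events occurring while $y(t-) = 0$). Each resulting $1$ matures, independently of everything else, with probability $\rho$, so the expected number of mature offspring of $x$ attributed to $y$ equals $E[K_y]\,\rho$. To bound $E[K_y]$, I compare with an idealized two-site process in which $y$ interacts only with $x$: other mature neighbors of $y$ can only preempt $x$'s birth attempts by placing $y$ in state $1$ or $2$, and hence can only reduce the number of $x$-attributable births at $y$. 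In the idealized dynamics, $(x,y)$ is a continuous-time Markov chain on $\{(2,0),(2,1),(2,2)\}$ absorbed at rate $1$ when $x$ dies; solving the associated $3\times 3$ linear system for $E[K_y^{\mathrm{ideal}} \mid y = 0]$ and optimizing in $\lambda,\delta\geq 0$ yields
\[
E[K_y]\,\rho \;\leq\; E[K_y^{\mathrm{ideal}}]\,\rho \;\leq\; \frac{2\gamma}{\gamma+1},
\]
with the supremum attained in the limit $\lambda \to \infty$, $\delta \to 0$; heuristically, during its unit-mean lifetime $x$ sees on average at most two birth--maturation cycles per neighbor, each successful with probability $\rho$.

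Summing over the at most $M$ neighbors of $x$ gives $E[\text{direct offspring of } x] \leq 2M\gamma/(\gamma+1)$. The hypothesis $\gamma < 1/(2M-1)$ is precisely equivalent to $2M\gamma/(\gamma+1) < 1$, so the genealogy is stochastically dominated by a subcritical multi-type branching process. Writing $Z_n$ for the number of generation-$n$ descendants of a given root, the tower estimate $E[Z_{n+1}\mid Z_n] \leq \mu Z_n$ with $\mu < 1$ yields $\sum_n E[Z_n] < \infty$ and hence finite total progeny a.s. Summing over the finitely many roots shows that a.s.\ only finitely many $2$'s ever appear; once no $2$'s remain no new $1$'s can be born, and the (finitely many) surviving $1$'s die within a.s.\ finite time, so the process reaches the all-$0$ configuration.

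The main obstacle I anticipate is rigorously justifying the comparison with the idealized dynamics: one needs a coupling, most naturally in the graphical-representation framework, showing that other mature neighbors of $y$ can only decrease $E[K_y]$ (equivalently, only decrease the expected time that $y$ spends in state $0$ during the lifetime of $x$). The ensuing three-state Markov chain computation, and the verification that the supremum $2\gamma/(\gamma+1)$ is approached in the regime $\lambda\to\infty$, $\delta\to 0$ rather than at an interior maximum, are mechanical.
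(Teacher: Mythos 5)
Your proposal is correct and is essentially the paper's argument: both bound the expected number of mature offspring per individual by $2M\gamma/(\gamma+1)$ and conclude by comparison with a subcritical branching process, noting that $\gamma<1/(2M-1)$ is exactly $2M\gamma/(\gamma+1)<1$. The only difference is in execution: the paper first reduces to the extremal case $\lambda=\infty$, $\delta=0$ by monotonicity and computes the offspring mean there (via $\mathbb{E}N_{X}=1$), whereas you keep general parameters, do the two-site Markov chain computation, and verify the supremum is attained as $\lambda\to\infty$, $\delta\to 0$; your flagged comparison step (interference from other neighbours only decreases successful births) is justified by the additivity/monotonicity of the graphical construction established in Section \ref{secbuild}.
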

This answers question 6 in \cite{krone}, where the author supplies a bound for $\mathbb{Z}^1$ in the case of nearest neighbour interactions, and asks whether a bound exists for other interactions, or for $\mathbb{Z}^d$ with $d>1$.\\

Our next result shows that two notions of survival for the two-stage contact process coincide, answering question 1 in \cite{krone} affirmatively.  For terminology see Sections \ref{critvals} and \ref{eqcrit}.
\begin{theorem}\label{thmcrit}
For the two-stage contact process on $\mathbb{Z}^d$, single-site survival occurs if and only if the upper invariant measure is non-trivial.
\end{theorem}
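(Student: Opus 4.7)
The equivalence is of Bezuidenhout--Grimmett type, and I would separate it into the two natural implications.

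One direction, that single-site survival implies non-triviality of the upper invariant measure $\bar\nu$, should be essentially a consequence of the duality relation (already simplified earlier in the paper) together with additivity. The plan is to express the $\bar\nu$-probability that a given site is in state $\geq 1$ (or state $2$) at equilibrium as the limiting probability that the dual process, started from a single appropriate seed, has not died out. Since single-site survival of the forward process and non-extinction of the dual are linked in the standard way, the survival probability gives a positive lower bound on the density of occupied sites under $\bar\nu$. Translation invariance then yields $\bar\nu \neq \delta_{\mathbf 0}$.

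The harder direction, that a non-trivial $\bar\nu$ forces positive survival probability from a single site, is where the real work lies. Here I would adapt the dynamic renormalization / restart argument of Bezuidenhout and Grimmett. The plan is: assuming $\bar\nu(\eta(0) \neq 0) > 0$, use translation invariance and ergodicity of $\bar\nu$ to conclude that for any $\varepsilon > 0$ and all large enough space-time boxes $B_{L,T}$, the process starting from the full configuration on $\mathbb Z^d$ contains, with probability close to one, a large cluster of type-$2$ sites in a sub-box near the top of $B_{L,T}$. By additivity and the finite-range nature of the dynamics, one can then approximate this by starting from a sufficiently large but finite seed of type-$2$ sites, and show that with positive probability the process produces type-$2$ seeds of the same form in two spatially shifted descendant boxes. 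This yields a comparison with supercritical oriented site percolation in a slab of $\mathbb Z^d \times \mathbb Z$, which itself percolates and so gives survival of the two-stage process started from any one of the seed type-$2$ configurations. A final monotonicity/irreducibility argument produces such a seed configuration, with positive probability, from a single initially occupied site.

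The main obstacle is clearly the renormalization step in the hard direction. Two features make it genuinely more delicate than in the classical contact process. First, only type-$2$ sites reproduce, so the restart condition must explicitly demand type-$2$ descendants rather than merely occupied ones; this requires some care in how the "good event" in each block is defined, in particular allowing the maturation time $1/\gamma$ to elapse before demanding descendants. Second, additivity in the two-stage setting needs to be invoked to decouple the restart event in disjoint space-time regions, using the graphical construction implicit in the duality relation. Once these are set up, however, the Bezuidenhout--Grimmett scheme should transfer with essentially cosmetic changes, and the deduction of single-site survival from the comparison with oriented percolation is standard.
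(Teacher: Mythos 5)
Your proposal has a genuine gap, and it occurs in both directions for the same reason: you are implicitly treating the two-stage contact process as self-dual, the way the ordinary contact process is. It is not. Its dual is the on-off process $\zeta_t$ (transitions $0\to 1$ at rate $\lambda n_2$, $1\to 2$ at rate $\gamma$, $2\to 1$ at rate $\delta$, $1,2\to 0$ at rate $1$), a genuinely different process. Consequently the duality relation gives $\nu(\{\xi:\xi(o)\neq 0\}) = \mathbb{P}(\zeta_t^o \textrm{ survives})$: the density under the upper invariant measure of the two-stage process equals the single-site survival probability of the \emph{on-off} process, not of the two-stage process itself. This breaks your first implication as written: the phrase ``single-site survival of the forward process and non-extinction of the dual are linked in the standard way'' is precisely the point at issue, since there is no standard link between survival of a process and survival of a \emph{different} dual process; establishing such a link is the real content of the theorem. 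The paper's route for this direction does not use duality at all: it applies the Bezuidenhout--Grimmett construction to the forward process (Lemma \ref{crit}) to get $\liminf_t \mathbb{P}(\xi_t^o(0)\neq 0)>0$ from single-site survival, and then attractiveness (the process started from all $2$'s dominates $\xi_t^o$) gives positive density under $\nu$.

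In your second direction the gap is the step ``approximate this by starting from a sufficiently large but finite seed of type-$2$ sites.'' Under the hypothesis that $\nu$ is non-trivial you control only the process started from \emph{full} occupancy, and the type-$2$ clusters it exhibits near the top of a space-time box may descend from initial sites arbitrarily far away --- the relevant truncation radius grows with the height of the box --- so no seed of fixed finite size inherits the property. The Bezuidenhout--Grimmett scheme takes finite-seed survival as its input hypothesis, and the density hypothesis does not supply that directly. The paper's resolution is to route through the dual: by duality, $\nu\neq\delta_0$ is \emph{equivalent} to single-site survival of the on-off process; the Bezuidenhout--Grimmett construction, verified for the on-off process as well (this is why Lemma \ref{crit} is stated for both processes), then shows the on-off process has a non-trivial upper invariant measure; and duality applied a second time, with the roles of the two processes exchanged, converts that into single-site survival of the two-stage process. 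The correct architecture is thus a cycle of four implications in which the renormalization machinery must be verified for \emph{both} the two-stage and the on-off process and duality is invoked twice; your version, which runs the machinery only for the forward process, cannot close the loop.
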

The proof uses the construction of \cite{crit} to show that for both the process and its dual, single-site survival implies the upper invariant measure is non-trivial.\\

An important question for growth models is that of \emph{complete convergence}, which we show is true for the two-stage contact process, answering question 3 in \cite{krone}.  Here $\lambda_c$ is the critical value for single-site survival as defined in Section \ref{critvals} and $\xi_t$ denotes the process.  The $\Rightarrow$ denotes weak convergence.
\begin{theorem}\label{thmcomp}
If $\lambda > \lambda_c$ then complete convergence holds, i.e.,
\begin{equation*}
\xi_t \Rightarrow \alpha\delta_0 + (1-\alpha)\nu
\end{equation*}
where $\nu$ is the upper invariant measure, $\delta_0$ concentrates on the configuration with all $0$'s and $\alpha = \mathbb{P}(\xi_t \textrm{ dies out })$.
\end{theorem}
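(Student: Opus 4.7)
My plan is to follow the classical template for complete convergence of additive particle systems: combine the duality relation proved earlier with a Bezuidenhout--Grimmett style block construction, using Theorem~\ref{thmcrit} to ensure that above criticality the process and its dual are each comparable to their upper invariant measures. Let $\hat\xi_t^B$ denote the dual process from a finite set $B$, write $\tau$ for the extinction time of $\xi_t$, and set $\alpha = \mathbb{P}(\tau<\infty)$.

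First, I would use additivity together with the simplified duality identity to express the probability of any cylinder event on the values of $\xi_t$ in a finite set $B \subset \mathbb{Z}^d$ as a function of the joint law of $\hat\xi_t^B$ and the initial configuration $\xi_0$. This reduces weak convergence of $\xi_t$ to understanding the large-time behaviour of $\hat\xi_t^B$ for each finite $B$. By Theorem~\ref{thmcrit}, applied both to the process and (via the same construction from \cite{crit}) to its dual, single-site survival is equivalent to non-triviality of the upper invariant measure, so for $\lambda > \lambda_c$ both $\xi_t$ and $\hat\xi_t$ survive from a single seed with positive probability and have non-trivial upper invariant measures.

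The core step is a Bezuidenhout--Grimmett block construction adapted to the two-stage process. For $\lambda > \lambda_c$ I would prove that there exist $L, T, k$ such that, from any configuration containing at least $k$ type-$2$ particles in some side-$L$ spatial box, with probability at least $1 - \epsilon$ the process has at least $k$ type-$2$ particles in each of two spatially disjoint translates of that box at time $T$. Iterating this estimate yields stochastic domination by a supercritical $k$-dependent oriented percolation, and hence a shape theorem: conditioned on survival, the occupied set grows linearly and its local density converges to that of $\nu$. Applying the same construction to $\hat\xi_t^B$ and combining with the duality reduction yields convergence of the finite-dimensional marginals of $\xi_t$ to $\alpha \delta_0 + (1-\alpha)\nu$, which is the claimed weak convergence.

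The main obstacle is the block construction itself. Unlike in the contact process, a single occupied site does not suffice to seed a restart --- a lone juvenile can die before maturing --- so the restart threshold must be taken $k \geq 2$, and one must verify that the type-$2$ population produced within one block is large enough to reseed two further blocks with high probability. This requires careful control of the joint dynamics of birth, maturation, and death, using monotonicity in $\gamma$ and $\delta$ together with additivity to decouple the contributions of distinct seeds. Once the restart estimate is in place, the remainder of the argument follows standard Bezuidenhout--Grimmett lines.
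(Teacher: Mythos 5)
Your high-level architecture matches the paper's: reduce weak convergence to finite dual configurations via the duality relation, then apply the Bezuidenhout--Grimmett construction to both the two-stage process and its on-off dual, using Theorem \ref{thmcrit} to guarantee that for $\lambda > \lambda_c$ both survive. (Your worry about restart seeds is legitimate but already resolved in the paper: Lemma \ref{crit} runs the construction with large discs of type-$2$ sites, so no single-juvenile seeding issue arises.) However, there is a genuine gap at the point where you ``combine'' the two constructions. The assertion ``hence a shape theorem: conditioned on survival, the occupied set grows linearly and its local density converges to that of $\nu$'' is circular: convergence of the conditional local law of $\xi_t$ to $\nu$ \emph{is} the complete convergence theorem (the extinction part being trivial), and it does not follow from oriented-percolation domination alone, which only yields a positive density of well-seeded spacetime blocks, not that the law of $\xi_t$ in a finite window is close to $\nu$.

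What is missing is the glue the paper takes from Griffeath: construct the dual $\zeta_s$, started from a finite $\zeta_0$, on the time interval $[t,2t]$ of the \emph{same} spacetime graph, so that $\xi_{2t}\sim\zeta_0 \Leftrightarrow \xi_t\sim\zeta_t$, and note that $(\xi_s)_{0\leq s\leq t}$ and $(\zeta_s)_{0\leq s\leq t}$ are independent because they are built over disjoint parts of the graph. Then
\begin{equation*}
\mathbb{P}(\xi_t\sim\zeta_t) = \mathbb{P}(\xi_t\neq 0)\,\mathbb{P}(\zeta_t\neq 0) - \mathbb{P}(\xi_t\neq 0,\ \zeta_t\neq 0,\ \xi_t\nsim\zeta_t),
\end{equation*}
where the product term converges to $(1-\alpha)\nu(\{\xi:\xi\sim\zeta_0\})$ (using duality once more, now against the all-$2$'s initial state). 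The whole theorem thereby reduces to a single lemma:
\begin{equation*}
\mathbb{P}(\xi_t\neq 0,\ \zeta_t\neq 0,\ \xi_t\nsim\zeta_t)\rightarrow 0,
\end{equation*}
i.e., a surviving process and a surviving dual, grown toward each other, are compatible somewhere with probability tending to $1$. This intersection lemma is exactly where your two block constructions should be used: each surviving structure dominates a supercritical oriented percolation with positive density of occupied blocks, one growing up from time $0$ and one growing down from time $2t$, and two such structures must meet with high probability. Your proposal never states or proves this lemma, and without it the duality reduction and the two block constructions never actually connect; once it is added, your argument becomes the paper's.
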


We now summarize the organization of the paper.  In Section \ref{secbuild} we construct the process and prove the duality relation.  In Section \ref{critvals} we recall the critical values defined in \cite{krone}.  In Section \ref{corrected} we fill in some missing details in the proof of Proposition 3.6 in \cite{krone}.  In Section \ref{critmat} we prove Theorem \ref{thmcritmat}.  In Section \ref{critmat} we give a sufficient condition for the edge speed of the process in one dimension to characterize survival, providing a partial answer to question 2 in \cite{krone}.  In Section \ref{eqcrit} we prove Theorem \ref{thmcrit}, and in Section \ref{compconv} we prove Theorem \ref{thmcomp}.  We discuss the survival region in Section \ref{surv}, and using the construction from the proof of Theorem \ref{thmcrit} we find that the process dies out on the boundary of the survival region, providing a partial answer to question 5.  We argue that question 4 appears not to have an affirmative answer, and we give some informal arguments as to why this should be so.\\

\section{Construction and duality}\label{secbuild}
We recall briefly the construction of the process.  Here the process $\xi_t$ lives on the state space $\{0,1,2\}^{V}$ where $V$ is the vertex set of an undirected graph $(V,E)$, with $V=\mathbb{Z}^d$ and $E = \{xy:0<\|x-y\|_{\infty}<r\}$ for some $r\geq 1$ being common choices.  The state space is equipped with the partial order $\xi\leq\xi' \Leftrightarrow \xi(x)\leq \xi'(x)$ for each $x \in V$, where $0<1<2$ is the order on the state at each site.  The process is \emph{attractive} if there exists a coupling so that $\xi_0\leq\xi_0' \Rightarrow \xi_t\leq\xi_t'$ for $t>0$.  It is \emph{additive} if $\xi_0 = \xi_0'\vee\xi_0'' \Rightarrow \xi_t = \xi_t'\vee\xi_t''$, where $(\xi\vee\xi')(x) = \max(\xi(x),\xi'(x))$ for each $x$.  It is \emph{monotone increasing} (\emph{decreasing}) with respect to a parameter $\lambda$ if $\xi_0\leq\xi_0'$ and $\lambda\leq\lambda'$ ($\lambda\geq\lambda'$) $\Rightarrow \xi_t\leq\xi_t'$.  We shall often use the word \emph{active} to refer to a site or a point in spacetime where the state is not $0$.\\

We can construct the process on any undirected graph $(V,E)$ by taking $n_2(x)$ to be the cardinality of the set $\{y\in V:xy\in E\}$.  Assign independent Poisson processes to each of the events:
\begin{itemize}
\item death of $1$'s and $2$'s at each site, at rate $1$
\item death of $1$'s at each site at the additional rate $\delta$
\item transmission across each edge at rate $\lambda$
\item maturation at each site, at rate $\gamma$
\end{itemize}
Place the events on the spacetime graph $V\times\mathbb{R}^+$ and fix a configuration at time $0$.  The configuration at later times can then be determined from the events on the graph.  To ensure it is well-defined it suffices to work backwards from a point $(x,t)$ on the spacetime graph and ensure that with probability 1, only finitely many events occur that can influence the state of $(x,t)$.  For this to be true it suffices that the graph has finite maximum degree, i.e., for some $M$ we have $\deg x \leq M<\infty$ for each $x \in V$; the desired property then follows by comparison with a branching process in which births occur at rate $\lambda M$.\\

Additivity of the process is immediate from this construction and from the fact that each transition is additive.  Monotonicity with respect to parameters can be established in the usual way; for example, to compare processes with identical values of $\gamma$ and $\delta$ and  transmission rates $\lambda<\lambda'$ on the same graph, simply add a point process at rate $\lambda'-\lambda$ for the extra transmission events in the second process, and note that this tends to give larger configurations as the process evolves.\\

For each $\delta$, there is a dual process which is given by the rates
\begin{eqnarray*}
0 & \rightarrow & 1 \textrm{ at rate } \lambda n_2\\
1 & \rightarrow & 2 \textrm{ at rate } \gamma\\
2 & \rightarrow & 1 \textrm{ at rate } \delta\\
1,2 & \rightarrow & 0 \textrm{ at rate } 1\\
\end{eqnarray*}
and which Krone calls the ``on-off'' process because of the $2\rightarrow 1$ transition.  Note the dual is similar to the original process, in that type $0$ represents a vacant state, and type $2$'s give birth to type $1$'s.  Define the compatibility relation $\xi\sim\zeta \Leftrightarrow \xi(x)\sim\zeta(x)$ for some $x$, where $1,2\sim 2$ and $2 \sim 1$.  Notice that type $2$ in the dual process corresponds to type $1$ or type $2$ in the original process and that dual type $1$ corresponds to original type $2$.\\

The interpretation of compatibility is that the configuration $\xi$ is strong enough to be compatible with $\zeta$ at some site, and the stronger the dual type, the easier it is to match up.  We give a simple proof of the following fact, the proof of which occupies several pages in \cite{krone}.
\begin{proposition}
\label{dual}
The dual process has the property that
\begin{equation*}
\xi_t \sim \zeta_0 \Leftrightarrow \zeta_t \sim \xi_0
\end{equation*}
with the dual running down the (same) spacetime graph from time $t$ to time $0$, so that $\zeta_s$ is on the time line $t-s$.
\end{proposition}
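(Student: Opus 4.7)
The plan is to couple $\xi$ and $\zeta$ on the same graphical construction and to show that $\mathbf{1}[\xi_s \sim \tilde{\zeta}_s]$ is constant across every Poisson event in $[0,t]$, where $\tilde{\zeta}_s := \zeta_{t-s}$ runs down the spacetime graph so that $\tilde{\zeta}_0 = \zeta_t$ and $\tilde{\zeta}_t = \zeta_0$. Between events nothing changes, and by the finite-degree bound only finitely many events in the backward cone of influence can affect the check, so this invariance yields $\xi_0 \sim \zeta_t \Leftrightarrow \xi_t \sim \zeta_0$ almost surely, which is the duality.

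First I would match each Poisson mark in the construction to a joint transition rule for $(\xi,\tilde{\zeta})$. The rate-$1$ death mark at $x$ kills both $\xi(x)$ and $\tilde{\zeta}(x)$ in their respective time directions; the rate-$\delta$ mark at $x$ performs $\xi(x){:}\,1\to 0$ going up and $\tilde{\zeta}(x){:}\,2\to 1$ going down; the rate-$\gamma$ mark performs $1\to 2$ for both $\xi$ and $\tilde{\zeta}$ in their respective directions; and a rate-$\lambda$ arrow from $x$ to $y$ acts forward as $(\xi(x),\xi(y)){:}\,(2,0)\to(2,1)$ and, with direction reversed, acts backward as $(\tilde{\zeta}(y),\tilde{\zeta}(x)){:}\,(2,0)\to(2,1)$. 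A rate check confirms that the marginals of this joint description match the forward rates and the dual rates listed before Proposition~\ref{dual}.

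It then remains to verify that compatibility is preserved across each event type. For the three single-site event types, a finite enumeration of the nine pairs $(\xi(x),\tilde{\zeta}(x))\in\{0,1,2\}^2$ shows that the joint state just below the event is compatible if and only if the joint state just above it is; since no other site is affected, the global indicator is preserved. The birth event on an edge $xy$ is the only case where compatibility can flip at an individual site. The main bookkeeping obstacle is the subcase in which both the forward and the dual rules fire at the same event, i.e.\ $(\xi(x),\xi(y))=(2,0)$ together with $(\tilde{\zeta}(y),\tilde{\zeta}(x))=(2,0)$, in which compatibility migrates from $x$ (joint state $(2,1)$ below the event) to $y$ (joint state $(1,2)$ above). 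In every other combination of firings one checks that either the compatibility status at each of $x,y$ is unchanged, or else compatibility is created at one of the two sites while the other already witnesses it, so that the disjunction over $\{x,y\}$ — and hence the global compatibility indicator — is preserved in every case.
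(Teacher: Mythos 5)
Your proposal is correct and follows essentially the same route as the paper: both arguments couple the forward and dual processes on the same graphical construction and induct over the Poisson events, checking for each event type (rate-$1$ death, rate-$\delta$ death, maturation, transmission) that compatibility is preserved across the event, with the transmission arrow being the one case where compatibility can move from one site to the other. The only difference is presentational --- the paper phrases the invariant as preservation of ``dualizable'' sets of forward configurations and thereby \emph{derives} the dual rates, whereas you take the dual rates as given and \emph{verify} that the indicator $\mathbf{1}[\xi_s \sim \tilde{\zeta}_s]$ is conserved, which is the same per-event computation.
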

\begin{proof}
We start from the above condition to construct the dual, showing that it has the stated transitions and rates.  The proof is given for the case $|V|<\infty$, that is, when the set of sites is finite, since only finitely many events occur in a finite time and we can proceed by induction on the events.  To extend this to the case $|V|=\infty$ fix a finite subset $V_0$ and let $V_k = \{y \in V:\dist(y,V_0)\leq k\}$, where $\dist$ is the graph distance.  Denoting by $_{V_k}\xi_t$ the process constructed using the events on the subset $V_k\times\mathbb{R}^+$ of the spacetime graph, there is an almost surely finite value of $k_0$ so that $_{V_k}\xi_s(x) = \xi_s(x)$ for $x \in V_0$ and $0\leq s \leq t$ when $k\geq k_0$, and this suffices to make the extension.\\

Say that a set of (forward) configurations $\Lambda$ is \emph{dualizable} if there is a dual configuration $\zeta$ so that
\begin{equation*}
\Lambda = \{\xi: \xi\sim\zeta\}
\end{equation*}
Note that $\zeta$ is unique, if it exists.  For fixed $\zeta_0$ and $0\leq s \leq t$ let $\Lambda_s = \{\xi_{t-s}:\xi_t\sim\zeta_0\}$.  Clearly, $\Lambda_0$ is dualizable with dual configuration $\zeta_0$.  If $\Lambda_s$ is dualizable, denote by $\zeta_s$ its dual configuration.  Suppose there is an event at time $s$, and use the notation $\xi_{t-s^+}$ and $\Lambda_{s^+}$ to denote the state just prior to its occurrence.  Note that 
\begin{equation*}
\Lambda_{s^+} = \{\xi_{t-s^+}:\xi_{t-s}\in\Lambda_s\}
\end{equation*}
Suppose that $\Lambda_s$ is dualizable with dual configuration $\zeta_s$, then $\Lambda_{s^+} = \{\xi_{t-s^+}:\xi_{t-s}\sim\zeta_s\}$.  We show that $\Lambda_{s^+}$ is dualizable by producing its dual configuration $\zeta_{s^+}$.  A type $2$ death at $x$ (i.e., a rate $1$ death event) kills both active types, so $\zeta_{s^+}(x)=0$ whatever the value of $\zeta_s(x)$; this causes the dual $1,2\rightarrow 0$ transition at rate $1$.  A type $1$ death at $x$ (i.e., a rate $\delta$ death event) kills only type $1$.  If $\zeta_s(x)=2$ i.e., a $1$ or a $2$ is sufficient for compatibility after the event, then a $2$ is required for compatibility before, so $\zeta_{s^+}(x)=1$; this is the dual $2\rightarrow 1$ transition at rate $\delta$.  A (forward) transmission event from $y \rightarrow x$ leads to a $1$ at $x$ after the event, if $y$ is in state 2 just before the event, so $\zeta_{s^+}(y) = 1$ if $\zeta_s(x) = 2$; this is the dual transmission event.  A maturation event at $x$ causes a $1\rightarrow 2$ transition, so that $\zeta_{s^+}(x)=2$ if $\zeta_s(x)=1$; this is the dual $1\rightarrow 2$ transition at rate $\gamma$.  For values of $\zeta_s(x)$ not mentioned, or for sites that aren't involved in the transition, it is easily verified that $\zeta_{s^+}(x)=\zeta_s(x)$.  This finishes the induction step and establishes the dual transitions, completing the proof.
\end{proof}
Before moving on, we note that the dual process is also additive and monotone increasing in $\lambda$ and $\gamma$, and monotone decreasing in $\delta$, a fact which is noted in \cite{krone} and which we use later.

\section{Main Results}\label{secmain}

\subsection{Critical values for survival}\label{critvals}
Denoting by $\xi_t^o$ the process starting a single mature site (the ``$o$'' stands for ``origin'', which if the process lives on the lattice, we can without loss of generality set to be the initially occupied site), we say $\xi_t^o$ survives if
\begin{equation*}
\mathbb{P}(\forall t>0, \exists x: \xi_t^o(x) \neq 0)>0
\end{equation*}
and dies out otherwise.  Defining the critical value
\begin{eqnarray*}
\lambda_c(\gamma,\delta) = \inf\{\lambda>0: \xi_t^o \textrm{ survives }\}
\end{eqnarray*}
it follows by monotonicity that $\lambda_c$ is an increasing function of $\delta$ and a decreasing function of $\gamma$ and that $\xi_t^o$ survives if $\lambda>\lambda_c$.  For each $\delta$, by taking $\gamma$ and $\lambda$ large enough and comparing to a (suitably scaled in time) 1-dependent bond percolation diagram it is possible to show that $\xi_t$ survives, which implies that $\lambda_c(\delta,\gamma)<\infty$ if $\gamma$ is large enough.  The first proof of this type is given by Harris for the contact process in \cite{harris}; its application to the two-stage process is noted in \cite{krone}.\\

For fixed $\delta$ the parameter space for the process is the quadrant $\{(\lambda,\gamma):\lambda\geq 0, \gamma \geq 0\}$, and by identifying the survival region $\mathcal{S} = \{(\lambda,\gamma):\xi_t^o \textrm{ survives }\}$ we obtain a phase diagram for survival.  We can define the critical lines
\begin{eqnarray*}
\lambda_*(\delta) = \inf\{\lambda:\xi_t^o \textrm{ survives for some } (\lambda,\gamma)\}\\
\gamma_*(\delta) = \inf\{\gamma:\xi_t^o \textrm{ survives for some } (\lambda,\gamma)\}
\end{eqnarray*}
that bound the survival region below, and on the left.  From monotonicity it follows that $\lambda_*(\delta)\geq\lambda_c(\infty)$, the critical value for the contact process, and also that $\gamma_*(\delta)\geq\gamma_*(0)$, the left-hand critical line when $\delta=0$.  We shall have more to say about the survival region in Section \ref{surv}.  First, we complete a proof given in \cite{krone} that characterizes $\lambda_*$ for any value of $\delta$.\\

\subsection{Correction to Proposition 3.6}\label{corrected}
In Krone, Proposition 3.6 it is claimed that $\lambda_*(\delta)=\lambda_c(\infty)$ for the process on $\mathbb{Z}^d$, for any dimension $d$.  However, the proof given covers only the case $d=1$.  This is because the paper to which it refers gives a finite spacetime condition for survival only when $\lambda>\lambda_c^{(1)}$, the critical value for the contact process in one dimension.  Here we use the more general construction of \cite{crit}, plus a perturbation argument, to show that $\lambda_*(\delta) \leq \lambda_c(\infty)$ in any dimension, which combined with the previous inequality implies the desired result.\\

In \cite{crit} it is shown for the contact process that if $\lambda>\lambda_c$ and $\epsilon>0$, we can place a latticework structure over an effectively two-dimensional region in $\mathbb{Z}^d\times \mathbb{R}^+$ and make a $1:1$ correspondence between certain spacetime boxes contained in this structure and the set $\{(x,y) \in \mathbb{Z}^2:y\geq 0, x+y \textrm{ is even}\}$ with the property that when the process starts with a large disc of active sites in the box corresponding to $(x,y)$, then with probability $>1-\epsilon$ it can produce a large disc of active sites in the boxes corresponding to both $(x-1,y+1)$ and $(x+1,y+1)$.  In their paper, they then show that if one decreases $\lambda$ slightly, this property still holds, and using results for oriented percolation in two dimensions, conclude that the process still survives.\\

In our case it suffices to show that the property still holds when $\gamma$ is decreased slightly from $\infty$, i.e., when $\gamma$ is large enough.  From this we may then conclude that if $\lambda>\lambda_c(\infty)$ then $\lambda>\lambda_c(\gamma)$ for some $\gamma$, which implies that $\lambda>\lambda_*$, or $\lambda_c(\infty)\geq\lambda_*$, and combining the inequalities, $\lambda_c(\infty)=\lambda_*$.\\

It is sufficient to show that on a finite spacetime region, when $\gamma$ is large enough and the two processes are started from the same configuration (with mature sites in the place of active sites in the two-stage process), with high probability,
\begin{itemize}
\item between any two transmission events incident at a given site, there is a maturation event, and
\item if at a fixed time the contact process has a certain set of active sites, then in the two-stage process those sites are all mature sites
\end{itemize}
The first condition ensures that no connections are cut due to a juvenile site being unable to give birth at a neighbouring site.  The second condition ensures that if the contact process has produced a large disc of active sites, then the two-stage process has produced a large disc of mature sites.\\

To satisfy both conditions, it suffices to ensure that maturation events occur arbitrarily often, since on a finite spacetime region $B \subset \mathbb{Z}^d\times \mathbb{R}^+$, for each $\epsilon>0$ there is a $\delta>0$ so that with probability $>1-\epsilon$, the waiting time between transmission events is $\geq\delta$ everywhere on $B$.  However, for each $\delta>0$ and $\epsilon>0$ there is a $\gamma_0$ so that if $\gamma>\gamma_0$, with probability $>1-\epsilon$ the waiting time between maturation events is $<\delta$ everywhere on $B$, thus for $\gamma>\gamma_0$ the conditions hold.\\

The two assertions of the last paragraph (those regarding waiting times) require proof, and it suffices to consider a spacetime region which is a single interval of length $L$.  To prove the first assertion, notice that with high probability a finite number $N$ of events occur in the interval, and with probability $e^{-\delta\lambda N}$ which $\rightarrow 1$ as $\delta\rightarrow 0$, each event takes time $\geq\delta$ to occur.  To prove the second assertion, break up the interval into pieces of length $\delta$, so that the number of events on each piece is distributed like a Poisson random variable with mean $\delta$.  The probability that on each interval at least one event has occurred is $(1-e^{-\delta\gamma})^{L/\delta}$ which $\rightarrow 1$ as $\gamma\rightarrow\infty$, for fixed $\delta$.

\subsection{Critical maturation rate (q.6)}\label{critmat}
In \cite{krone}, a lower bound on $\gamma_*(\delta)$ is given for the process on $\mathbb{Z}$ with nearest-neighbour interactions which is about $1/4$ when $\delta=0$ and increases towards $1$ as $\delta\rightarrow\infty$.  Here we answer question 6 in that paper, which asks for lower bounds on $\gamma_*(\delta)$ in other settings; we obtain here a simple lower bound on $\gamma_*(0)$ (and by monotonicity, on $\gamma_*(\delta)$) that works for any graph of finite maximum degree, and depends only on the maximum degree.  A graph has finite maximum degree if there is a number $M$ so that $\deg x \leq M < \infty$ for each $x \in V$.
\begin{proposition}\label{gammacrit}
If $\delta=0$ and $\gamma < 1/(2M-1)$ the process dies out for any value of $\lambda$.
\end{proposition}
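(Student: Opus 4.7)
The plan is to compare the process, starting from a single adult, to a subcritical Galton--Watson branching process. By additivity, it suffices to prove extinction from a single mature site. I would consider the tree of infections rooted at this initial adult: an adult's children in the tree are the juveniles it creates during its adult lifetime, and each juvenile has at most one child, namely the adult it becomes if it matures before dying. Since a juvenile's dynamics depend only on events at its own site (maturation at rate $\gamma$, death at rate $1$ because $\delta = 0$), each juvenile independently matures to an adult with probability exactly $\gamma/(1+\gamma)$, irrespective of the rest of the process.

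The core estimate is that for any adult at $x$ and any neighbor $y$, the expected number of juveniles $x$ produces at $y$ over its lifetime is at most $2$, uniformly in $\lambda$. To see this, consider the idealized dynamics in which $x$ is adult throughout an independent $\mathrm{Exp}(1)$ time while $y$ evolves as a three-state Markov chain with rates $\lambda$ (empty to juvenile), $1$ (juvenile to empty), $\gamma$ (juvenile to adult), and $1$ (adult to empty). Writing $Q_E, Q_J, Q_A$ for the expected number of empty-to-juvenile transitions at $y$ starting from each state, a first-step recursion gives $Q_J = Q_A = Q_E/2$ and hence
\[
Q_E = \frac{2\lambda}{\lambda+2} \le 2.
\]
A monotone coupling via the graphical construction then shows that in the full process, where $y$ may have additional adult neighbors, $y$ spends no more time empty than in the idealized case, so the expected number of $x$-initiated infections at $y$ is still at most $Q_E \le 2$. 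Formalizing this coupling, in particular verifying that enriching $y$'s set of adult neighbors can only shorten its empty periods, is the main technical obstacle.

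Summing over the at most $M$ neighbors, the expected number of juveniles produced by any adult is at most $2M$, and multiplying by the maturation probability, each adult has mean at most $2M\gamma/(1+\gamma)$ adult grandchildren in the infection tree. The hypothesis $\gamma < 1/(2M-1)$ is precisely the condition that makes this mean strictly less than $1$, so by iteration the expected number of adults at generation $n$ of the tree is at most $(2M\gamma/(1+\gamma))^n$. Consequently the total number of adults ever produced has finite expectation, the tree is almost surely finite, and the process dies out.
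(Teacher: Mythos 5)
Your proof is correct and follows the same strategy as the paper's: compare to a branching process in which each adult has at most $2$ expected offspring per neighbour, hence at most $2M$ in total, while each juvenile matures with probability $\gamma/(1+\gamma)$, so that the mean number of next-generation adults per adult is at most $2M\gamma/(1+\gamma)$, which is $<1$ exactly when $\gamma<1/(2M-1)$. The difference lies in how the per-neighbour bound of $2$ is obtained, and here the paper uses a trick worth knowing: since the process is monotone increasing in $\lambda$ and the claim is for every $\lambda$, it suffices to treat $\lambda=\infty$, where the $0\rightarrow 1$ transition is instantaneous whenever an adult neighbour is present. Then every vacant neighbour is filled the moment $x$ matures, each subsequent offspring at $y$ requires a death at $y$, and the number of re-occupations during $x$'s $\mathrm{Exp}(1)$ lifetime $X$ is $N_X$ with $N$ a rate-$1$ Poisson process, so the expected offspring count at $y$ is at most $1+\mathbb{E}N_X=1+1=2$. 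This reduction also dissolves what you call your ``main technical obstacle'': at $\lambda=\infty$ one may simply attribute every re-occupation of $y$ during $x$'s lifetime to $x$, which over-counts and is therefore harmless, so no comparison with an idealized single-neighbour dynamics is needed. Your finite-$\lambda$ route does go through: the first-step recursion and the value $Q_E=2\lambda/(\lambda+2)$ are correct, and the monotone coupling you defer holds precisely because $\delta=0$ --- juveniles and adults at $y$ then die at the same rate $1$, so the occupancy (empty versus occupied) process at $y$ is a function only of incoming transmission arrows and the single rate-$1$ death clock, and adding arrows from other adult neighbours preserves the invariant that whenever $y$ is empty in the full process it is empty in the idealized one; checking the three event types (transmission, death, maturation) against this invariant is routine. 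So your argument can be completed as stated, and it even yields the slightly sharper per-neighbour bound $2\lambda/(\lambda+2)$; the paper's $\lambda=\infty$ reduction gets the same threshold with less machinery.
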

\begin{proof}
It suffices to show this for $\lambda=\infty$, i.e., when the $0\rightarrow 1$ transition at $x$ is instantaneous if $n_2(x)>0$.  The result is obtained by estimating the average number of offspring of a site $x$ in state 1.  The transition $1\rightarrow 2$ occurs with probability $\gamma/(1+\gamma)$, since $1\rightarrow 0$ at rate $1$ and $1\rightarrow 2$ at rate $\gamma$.  If the $1\rightarrow 2$ transition occurs at $x$, then each unoccupied neighbour of $x$ becomes occupied.  In order for $x$ to send a second offspring to a neighbour $y$, the existing offspring at $y$ has to die off.  Denoting by $N_t$ a Poisson process with rate $1$ (representing the number of deaths of at $y$, starting from the moment the $1\rightarrow 2$ transition occurs at $x$) and by $X_t$ an independent exponential random variable (representing death of the mature organism at $x$), the number of additional offspring produced at $y$ is equal to $N_{X_t}$.  Intuitively, we might expect $\mathbb{E}N_{X_t} = \mathbb{E}N_{\mathbb{E}X_t} = 1$, and computing, we confirm that
\begin{eqnarray*}
\mathbb{E}N_{X_t} &=& \int_0^{\infty}\sum_k k x^k\frac{e^{-x}}{k!}e^{-x}dx\\
&=& \int_0^{\infty}\sum_k k \frac{x^k}{k!}e^{-2x}dx\\
&=& \sum_k k \int_0^{\infty} \frac{x^k}{k!}e^{-2x}dx\\
&=& \sum_k k 2^{-(k+1)}\\
&=& 1
\end{eqnarray*}
Thus the expected number of offspring at each initially unoccupied neighbouring site is $1+\mathbb{E}N_{X_t} = 1 + 1 = 2$, so the expected number of offspring of a site in state 2 is at most $2M$.  Since the probability of making the $1\rightarrow 2$ transition before dying is $\gamma/(1+\gamma) = 1/(1+1/\gamma)$ the expected number of offspring of a site in state 1 is at most $2M/(1+1/\gamma)$.  Setting this $<1$ and comparing to a branching process gives the result.
\end{proof}  

From Proposition \ref{gammacrit} we conclude that $\gamma_*(0) \geq 1/(2M-1)$, so that $\gamma_*(\delta)\geq\gamma_*(0)\geq 1/(2M-1)$ for any $\delta$, proving Theorem \ref{thmcritmat}.  For the nearest-neighbour process on $\mathbb{Z}^d$ we have $M=2d$, giving $\gamma_*(\delta)\geq 1/(4d-1)$, which is $1/3$ for $d=1$, $1/7$ for $d=2$, etc.\\

\subsection{Single-site survival and edge speed (q.2)}
Let $\xi_t^-$ denote the process starting from type $2$'s on $\mathbb{Z}^-$, and let $r_t = \sup\{x:\xi_t^-(x)\neq 0\}$ denote the right edge of $\xi_t^-$.  A result of Durrett shows that $r_t/t$ converges to a constant $\alpha$ as $t\rightarrow\infty$.  It is asked in \cite{krone} (question 2 in Section 4) whether $\lambda_c = \inf\{\lambda:\alpha(\gamma,\delta)>0\}$.  Here we do not prove this, but we give a sufficient condition for it to be true.  To get a sense of what it means, note that this property is equivalent to the property that $\xi_t$ is supercritical (i.e., $\lambda>\lambda_c$) if and only if the right edge of the process started from a half-line of mature sites has a positive spreading speed.  For the equivalence of these statements, note that $\alpha$ is upper semi-continuous in $\lambda$, since it is the infimum of a family of continuous functions as described in \cite{speed}.  \\

One side of the result is easy; letting $\xi_t^+$ denote the process starting from type $2$'s on $\mathbb{Z}^+$ and $\ell_t$ its left edge, by attractiveness $\xi_t^o \leq \min \xi_t^+,\xi_t^-$, so $\xi_t^o(x)=0$ for $x>r_t$ and for $x<l_t$.  If $\alpha<0$ then by symmetry $\ell_t/t \rightarrow -\alpha>0$.  Since $r_t\rightarrow -\infty$ and $l_t\rightarrow \infty$, eventually $r_t<l_t$ and $\xi_t^o(x)=0$ for all $x$, i.e., $\xi_t^o$ dies out.\\

For the converse, for $x\in\mathbb{Z}$ denote by $C_x$ the ``active cluster'' of $x$, i.e., the set of spacetime points $(y,t)$ such that if site $x$ is initially in state $2$, then site $y$ is active at time $t$, and denote by $|C_x|$ its width, that is, $|C_x| = \sup\{|y-x|:(y,t) \in C_x \textrm{ for some }y,t\}$.  If the one-site process $\xi_t^o$ survives then $\mathbb{E}|C_x|=\infty$ for each $x$, thus if $\mathbb{E}|C_x|<\infty$ then $\lambda \leq\lambda_c$.  By analogy with percolation theory \cite{perc} we might guess that the converse holds, i.e., that if $\lambda<\lambda_c$ then $\mathbb{E}|C_x|<\infty$; this is proved, for example, for the contact process in \cite{subcrit}.  We do not pursue this here, but instead show that if $\mathbb{E}|C_x|<\infty$ then $\alpha\leq 0$.  Thus a sufficient condition for edge speed to characterize single-site survival is for the subcritical process to have a finite expected size.\\

\begin{proposition}\label{cluster}
If $\mathbb{E}|C_x|<\infty$ then $\alpha \leq 0$.
\end{proposition}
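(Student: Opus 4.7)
The plan is to use additivity to decompose the half-line process as
\[
\xi_t^- = \bigvee_{x \leq 0} \xi_t^{\{x\}},
\]
where $\xi_t^{\{x\}}$ is the process started from a single type $2$ at $x$, all built on the common graphical representation used for $\xi_t^-$. For each such $x$, any site $y$ active in $\xi_t^{\{x\}}$ at time $t$ satisfies $(y,t) \in C_x$, so $y - x \leq |C_x|$. Taking suprema yields the pathwise bound
\[
r_t \leq \sup_{x \leq 0}\,(x + |C_x|).
\]

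The key step is to show the right-hand side is almost surely finite. By translation invariance the $|C_x|$ are identically distributed, and the hypothesis $\mathbb{E}|C_0| < \infty$ gives both $|C_x| < \infty$ a.s. for each individual $x$ and the tail summability
\[
\sum_{n \geq 1} \mathbb{P}(|C_0| \geq n) \leq \mathbb{E}|C_0| < \infty.
\]
Applying Borel--Cantelli (the direction that needs only summability, not independence) to the events $A_n = \{|C_{-n}| \geq n\}$, we conclude that almost surely only finitely many $A_n$ occur, i.e., $x + |C_x| < 0$ for all but finitely many $x \leq 0$. Since the finitely many exceptional terms are each a.s. finite, $Z := \sup_{x \leq 0}(x + |C_x|)$ is finite a.s.

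Combining these, $r_t \leq Z$ for every $t$, so $\limsup_{t \to \infty} r_t/t \leq 0$ a.s., and since $r_t/t \to \alpha$ a.s.\ by Durrett's theorem, we get $\alpha \leq 0$. I do not foresee a real obstacle: the argument is essentially a Borel--Cantelli estimate once the correct decomposition is in place. The one point to verify carefully is that the definition of $|C_x|$ as the \emph{width} of the active cluster really does give $r_t^{(x)} \leq x + |C_x|$ (which it does, since $|y-x| \leq |C_x|$ for every $(y,t) \in C_x$), and that no independence is needed for the Borel--Cantelli step, both of which are straightforward.
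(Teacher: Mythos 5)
Your proof is correct and is essentially the paper's own argument: both rest on translation invariance plus the first Borel--Cantelli lemma applied to the events $\{|C_{-n}| \geq n\}$, whose probabilities are summable precisely because $\mathbb{E}|C_0| < \infty$. The only difference is organizational: the paper argues by contradiction (if $\alpha > 0$ then infinitely many of these events would have to occur), whereas your pathwise bound $r_t \leq \sup_{x \leq 0}\,(x + |C_x|)$ reaches the conclusion directly and, if anything, renders more cleanly the step the paper glosses as ``later activity must originate from sites which are progressively further to the left.''
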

\begin{proof}
If $\mathbb{E}|C_x|<\infty$ but $\alpha>0$ then each $C_x$ is bounded almost surely, but for each $\epsilon>0$ eventually $r_t/t > \alpha-\epsilon$, which means with probability 1 there is an infinite sequence of sites $(x_k)$ in $\mathbb{Z}^-$ and $(y_k) $ in $\mathbb{Z}^+$ with $x_{k+1}<x_k$ for each $k$, and an infinite sequence of times $(t_k)$ with $t_k\rightarrow\infty$ such that for each $k$, $(x_k,0)\rightarrow (y_k,t_k)$.  This is because the cluster of any finite collection of sites is almost surely bounded, which means that later activity of the process must originate from sites which are progressively further to the left; note that  this implies also that the stated paths must be disjoint, although we will not need this here.  In any case, the event $|C_x| \geq |x|$ occurs for infinitely many $x \in \mathbb{Z}^-$.  However,
\begin{equation*}
\sum_{x \in \mathbb{Z}^-}\mathbb{P}(|C_x| \geq |x| = \mathbb{P}(|C_0 \geq 0|) + \mathbb{E}|C_x| < \infty
\end{equation*}
so applying the Borel-Cantelli lemma, $|C_x| \geq |x|$ occurs infinitely often with probability zero, contradicting our assumption.
\end{proof}

\subsection{Equality of critical values (q.1)}\label{eqcrit}
For any attractive growth model there is at least one other characterization of survival aside from single-site survival, or divergence of the expected cluster size, which is the existence of a non-trivial upper invariant measure $\nu$, obtained as the weak limit of the distribution of the process started from its largest initial configuration (in the case of $\xi_t$, when started from all sites in state 2).  For either the two-stage contact process or the on-off process, this weak limit exists by attractiveness, and from the Feller property is an invariant measure for the system; see \cite{ips}, Chapter, Theorem 2.3 on page 135 for a proof.  The proof is for spin systems but generalizes without modification to any attractive system with a largest configuration.\\

It is possible that $\nu = \delta_0$, the measure that concentrates on the configuration with all 0's; we say that $\nu$ is non-trivial if $\nu \neq \delta_0$, equivalently, if $\nu$ assigns positive density at each site, that is, $\nu\{\xi:\xi(x)\neq 0\} >0$ for each $x$.  In \cite{krone} (question 1 in Section 4), it is asked whether single-site survival is equivalent to this property.  First we show that single-site survival of either the two-stage contact process, or of the on-off process, implies that $\nu \neq \delta_0$, which supplies one direction of the proof.  We then use the duality relation to observe that
\begin{equation*}
\nu(\{\xi:\xi(o)\neq 0\}) = \mathbb{P}(\zeta_t^o \textrm{ survives })
\end{equation*}
where $\xi_t$ is the two-stage contact process and $\zeta_t$ is the on-off process, and that the same property holds when $\xi_t$ and $\zeta_t$ are exchanged in the formula.  Thus if the two-stage contact process has a non-trivial stationary distribution, then the on-off process has single-site survival, which means that the on-off process has a non-trivial stationary distribution, which means that the two-stage contact process has single-site survival, which supplies the other direction of the proof, and shows that the two notions of survival are in fact equivalent, proving Theorem \ref{thmcrit}.  Therefore, it suffices to show that single-site survival of the two-stage process, or of the on-off process, implies that $\nu \neq \delta_0$.\\

For the (single-stage) contact process $\eta_t$ on $\mathbb{Z}^d$, if $\lambda>\lambda_c$ then the method described in \cite{crit} allows us to conclude that under a suitable renormalization and started from a finite number of active sites, $\eta_t$ dominates a supercritical 1-dependent oriented site percolation process in two dimensions, for which it is known that the origin is active for a positive fraction of the time, and from which it follows that $\liminf_t \mathbb{P}(\eta_t^o(0) \neq 0)>0$, which since $\eta_0^o \leq \eta_0^1$ and by attractiveness implies that $\nu(\{\eta:\eta(0) \neq 0\} = \lim_{t\rightarrow\infty}\mathbb{P}(\eta_t^1(0) \neq 0)>0$, where $\eta_t^1$ is the contact process started from all sites active.  The following lemma allows to conclude the same fact for the two-stage contact process, whenever the interaction neighbourhood is symmetric about permutation and sign change of coordinates, and such that with some probability, any site can infect any other site; the first condition we call coordinate symmetry, and the second we call irreducibility.  Note the interaction neighbourhood must of course be finite.

\begin{lemma}\label{crit}
The construction in \cite{crit} is valid for the two-stage contact process and for the on-off process on $\mathbb{Z}^d$, for any coordinate-symmetric and irreducible interaction neighbourhood.
\end{lemma}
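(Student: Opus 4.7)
The plan is to follow the Bezuidenhout--Grimmett construction of \cite{crit} step by step, verifying that each ingredient goes through for the two-stage contact process and for the on-off process, given the structural assumptions we have already established. The key properties needed from \cite{crit} are: attractiveness, additivity, translation invariance under lattice symmetries, finite range interaction, and a ``finite space-time condition'' which says that single-site survival implies that on some bounded spacetime region, starting from a single seed one can, with probability arbitrarily close to $1$, produce a prescribed large set of active (or mature) sites in a translated copy of the region. Once these are in hand, the restart and block-percolation arguments of \cite{crit} apply essentially verbatim.

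First I would check the structural prerequisites. Attractiveness and additivity were established in Section \ref{secbuild} using the graphical construction, and the observation after Proposition \ref{dual} notes the same for the on-off process. Coordinate symmetry of the interaction neighbourhood gives invariance under permutations and sign changes of coordinates, which is all that \cite{crit} uses when arguing that the two ``daughter boxes'' can be produced symmetrically. Irreducibility plus finite range ensure that, with positive probability in finite time, a single active site can produce activity at any prescribed site within the interaction range, which is needed for the local modification arguments inside a block. Finite range plus the fact that disjoint spacetime regions use disjoint Poisson clocks give the $1$-dependence of the resulting oriented site percolation.

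Next I would adapt the finite space-time condition. In \cite{crit} one extracts from single-site survival the existence of $L,T$ such that from the state consisting of a single active site at the origin, with probability $>1-\epsilon$ the process at time $T$ has active sites inside two prescribed translates of $[-L,L]^d$. For our processes, the same extraction works word for word provided ``active'' is replaced by ``state $2$'' (for the two-stage process) or ``state $1$'' (for the on-off process, where type $1$ is the stronger form of compatibility). The reason is that for the two-stage process, a type $1$ site becomes type $2$ before dying with probability $\gamma/(1+\gamma+\delta)>0$, so whenever the process has many active sites we can, by a small additional coupling and a little more time, insist that a positive fraction of them are type $2$; the analogous fact holds for the on-off process because type $2$ becomes type $1$ at rate $\delta$ (when $\delta>0$, handled directly) or, for $\delta=0$, because type $2$ gives birth to type $1$ at rate $\lambda n_2$ and the required type is already type $1$. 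Attractiveness then lets us start from a full box of the stronger type and only lose the comparison in the direction that makes the daughter-box event more likely, not less.

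The main obstacle I anticipate is this last adaptation of the finite space-time condition: the Bezuidenhout--Grimmett argument repeatedly uses that ``active site begets active site'' with no intermediate state, whereas we must keep track of whether the offspring we deliver to the daughter boxes are in the correct type (mature/state $2$ for the two-stage process, type $1$ for the on-off dual) to serve as a valid seed for the next generation. I would handle this by inflating the block time $T$ by a deterministic constant chosen so that, conditional on the Bezuidenhout--Grimmett event producing an active site in each daughter box, each such site has been present long enough that with probability $>1-\epsilon$ it has made the required transition at least once; a union bound over the finitely many required sites, together with attractiveness and additivity, then closes the argument and reduces the lemma to the cited construction.
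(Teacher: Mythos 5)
Your overall strategy---walking through the Bezuidenhout--Grimmett construction and checking each ingredient---is the same as the paper's, and your careful treatment of the type-bookkeeping (which type of site must fill the daughter boxes) corresponds to something the paper dispatches more simply: it just prescribes that the large discs consist of type-$2$ sites and invokes irreducibility to produce them from a single active site. (Incidentally, your claim that the on-off process needs type-$1$ discs because type $1$ is ``the stronger form of compatibility'' has the paper's convention backwards---dual type $2$ is the one that is easier to match---but by irreducibility either choice can be made to work, so this is not fatal.) The real problem is that your proposal omits three verifications that the construction genuinely requires, two of which are exactly the points where a three-state process differs from the contact process.

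First, you never address the FKG inequality. The construction in \cite{crit} uses positive correlations essentially, e.g.\ to combine the events that the process reaches the various orthants of the top and sides of a spacetime box; for a three-state process this must be re-justified, and the paper does so by citing the invariance of positive correlations (\cite{ips}, Theorem 2.14), whose hypotheses---attractiveness together with the fact that every transition is between comparable states---have to be checked for both processes. Second, you do not verify the dichotomy underlying the extraction of the finite space-time condition: the argument that survival forces the population to grow without bound rests on the fact that the process dies out if its population dips below a fixed level infinitely often, and for a multi-type process this needs the observation that there are finitely many active types, each of which dies before reproducing with positive probability. Third, and most importantly given what the lemma actually asserts, you say nothing about how to handle interaction ranges $r>1$. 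The construction of \cite{crit} is written for nearest-neighbour interactions, and the paper identifies the passage to general neighbourhoods as ``the only real modification'': the ``sides'' of the spacetime boxes must be widened to slabs whose width equals the interaction range, so that transmission from the box to the outside world can still be controlled, and the large discs must be produced at a correspondingly larger distance. Your proposal mentions finite range only in connection with the $1$-dependence of the limiting percolation process, which is a different issue; as written, your argument covers at best the nearest-neighbour case, and even there leaves the FKG and survival-dichotomy steps unjustified.
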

\begin{proof}
By following each step of the proof, the construction is seen to apply to these processes; we address the main concerns, but omit the details.  The only real modification is to allow for larger neighbourhoods, and it is already noted in \cite{add} that this modification is not hard.\\

In the construction in \cite{crit}, nearest-neighbour interactions are assumed.  This condition can be relaxed by redefining the ``sides'' of the box to be a region whose width is equal to the interaction range of the process.  In this way, we can control transmission from the sides of the rectangle to the outside world as is done in the nearest-neighbour case.\\

When widening the sides, it is necessary to make sure that a large finite disc can be produced at an extra distance corresponding to the range of the interaction, but this can be prescribed.  We can choose which type to require for the discs; we choose arbitrarily that it consist of type $2$ sites.  Irreducibility is required to ensure that, starting from a single infectious site, all sites in a large finite disc can be made infectious with a certain probability.\\

Coordinate symmetry is implicit in the construction (rectangles are used rather than parallelograms, and all side lengths but the one along the time axis are the same), and it is assumed when proving that the process reaches each orthant of the top and sides of the box with high probability.\\

The construction uses the property that the process dies out if its population dips below a certain value infinitely often.  This is a property that holds for any growth model in which there is a finite number of active types, and each active type dies before reproducing with a certain probability.  The analogous survival argument for the sides of the box follows also from this property (see \cite{sis}, Part 1, Proposition 2.8 for a proof in which the extension is more obvious).\\

For the (usual) contact process, the application of the FKG inequality given in the construction follows from the invariance of positive correlations (see \cite{ips}, Theorem 2.14 on page 80), for which it is sufficient that the process be attractive and that its transitions occur only between comparable states, a property which is true of the two-stage contact process and of the on-off process.  The remainder of the arguments use the Markov or strong Markov properties, are geometrical, or pertain to oriented percolation, and no modification is required.
\end{proof}

It follows from the Lemma and from the discussion preceding it that for the two-stage contact process or the on-off process on $\mathbb{Z}^d$, whenever there is single-site survival ($\lambda>\lambda_c$) the upper invariant measure assigns a positive density at each site, i.e., $\nu(\{\xi:\xi(x)\neq 0\})>0$ for each $x$.  The proof is now complete.\\

\subsection{Complete convergence (q.3)}\label{compconv}
Let $\delta_0$ be the measure that concentrates on the configuration with all 0's, and let $\nu$ be the upper invariant measure introduced in the previous section.  For an attractive growth model, complete convergence means that
\begin{equation*}
\xi_t \Rightarrow \alpha\delta_0 + (1-\alpha)\nu
\end{equation*}
as $t\rightarrow\infty$, where $\Rightarrow$ denotes weak convergence and $\alpha = \mathbb{P}(\xi_t \textrm{ dies out })$.  In \cite{krone} it is asked whether complete convergence holds for the two-stage contact process, when $\lambda>\lambda_c$.  We follow \cite{growth}, Section 5, where the argument is used for the contact process; the idea is originally due to Griffeath \cite{limgriff}.  Fix an arbitrary configuration $\xi_0$, and a dual configuration $\zeta_0$ with finitely many active sites; doing this for all such $\zeta_0$ we will recover the finite-dimensional distributions of the upper invariant measure.  We have that
\begin{equation*}
\xi_{2t} \sim \zeta_0 \Leftrightarrow \xi_t \sim \zeta_t
\end{equation*}
where $\zeta_s$, $0\leq s \leq t$ is constructed on the same spacetime graph as $\xi_t$ and run from time $2t$ down to time $t$, with initial configuration $\zeta_0$.  Use the notation $\xi \neq 0$ to denote ``not identically zero''.  Then observe that
\begin{equation*}
\mathbb{P}(\xi_t \sim \zeta_t) = \mathbb{P}(\xi_t \neq 0, \zeta_t \neq 0) - \mathbb{P}(\xi_t \neq 0,\zeta_t \neq 0, \xi_t \nsim \zeta_t)
\end{equation*}
Since they are built over disjoint parts of the graph, $\xi_s$, $0\leq s \leq t$ and $\zeta_s$, $0\leq s \leq t$ are independent, so
\begin{equation*}
\mathbb{P}(\xi_t \neq 0, \zeta_t \neq 0) = \mathbb{P}(\xi_t \neq 0)\mathbb{P}(\zeta_t \neq 0)
\end{equation*}
for each $t>0$.  Using the duality relation, $\mathbb{P}(\zeta_t \neq 0) = \mathbb{P}(\xi_t \sim \zeta_0)$ with $\xi_0$ in this case being the configuration with all $2$'s.  Letting $t\rightarrow\infty$
\begin{equation*}
\mathbb{P}(\xi_t \neq 0)\mathbb{P}(\zeta_t \neq 0) \rightarrow (1-\alpha)\nu(\{\xi:\xi\sim\zeta_0\})
\end{equation*}
To have complete convergence, it therefore suffices to show that
\begin{equation*}
\mathbb{P}(\xi_t \neq 0,\zeta_t \neq 0,\xi_t \nsim \zeta_t) \rightarrow 0
\end{equation*}
as $t\rightarrow\infty$.  A method for doing this is outlined in \cite{growth} for a certain class of growth models.  They use a restart argument to show that whenever the process survives, and suitably rescaled, it eventually dominates a two-dimensional oriented percolation process, which is known to have a positive density of sites.  Using this fact it is then argued that if run for long enough, the process and its dual intersect with high probability.  In Section 5 of \cite{crit} it is noted that, using their construction and the ideas from \cite{growth}, the same can be concluded for the contact process in $\mathbb{Z}^d$.  Noting the equality of critical values proved in the previous section, if $\lambda>\lambda_c$ the construction of \cite{crit} can be applied to both the two-stage contact process and the on-off process, and the same arguments apply to show that the two processes eventually intersect with high probability, which proves Theorem \ref{thmcomp}.\\

\subsection{Structure of the survival region (q.5 and q.4)}\label{surv}
Continuing the analysis of the survival region $\mathcal{S}$ begun in Section \ref{critvals}, we show that the process dies out on the boundary $\partial S$.  By monotonicity $\lambda_c(\gamma)$ can have only jump discontinuities, which means that the boundary of the survival region is the set
\begin{equation*}
\{(\gamma,\lambda):\gamma\geq\gamma_*,\lambda_c^-(\gamma)\geq\lambda\geq\lambda_c^+(\gamma)\}
\end{equation*}
where $\lambda_c^-(\gamma)$ and $\lambda_c^+(\gamma)$ are the left-hand and right-hand limits of $\lambda_c$ at $\gamma$; set $\lambda_c^-(\gamma_*)=\infty$.\\

It follows from Lemma \ref{crit} that survival of the two-stage contact process is given by a finite spacetime condition of the form ``a certain event happens with probability $>1-\epsilon$'', where $\epsilon$ is sufficiently small.  Moreover, the probability of this event is continuous in $\lambda$ and $\gamma$ (also $\delta$, but we will not use this fact here).  This is because by a small enough change in parameters, on a finite spacetime region we can ensure that the probability of even one more or one fewer transmission/maturation events can be made arbitrarily small.  This implies that $\mathcal{S}$ is an open subset of the plane, in any dimension and for any value of $\delta$.  Since $\mathcal{S}$ is open it follows that the process dies out on its boundary $\partial S$, which includes the critical values $\lambda_c(\gamma)$.\\

It seems that question 4, namely, whether there is a formula for $\lambda_c$ in terms of $\lambda_c(\infty)$, $\gamma$ and $\delta$, should be false.  One good reason to believe this is that for the contact process, a sequence of approximants is known that converges to the critical value, and these are roots of successively more complicated rational functions, as shown in \cite{ziezold}.  There is no obvious reason to believe why the introduction of an additional stage to the process should lead to a critical value which is any simpler to determine, even if the critical value of the contact process is used in the expression.\\

Remaining questions for the survival region include whether $\lambda_c(\gamma)$ is continuous, whether it is strictly decreasing on $\{\gamma > \gamma_*\}$ and whether
\begin{equation*}
\lim_{\gamma\rightarrow \gamma_*^+}\lambda_c(\infty) = \infty
\end{equation*}
We believe the answers are respectively yes, yes, and yes, but we are not sure how to prove this.

\begin{comment}
We now give a partial converse.  From the proof of Proposition \ref{cluster} it follows easily that if $\mathbb{E}|C_x|<\infty$ then $\nu = \delta_0$, since starting from all sites in state 2, if $\lim_{t\rightarrow\infty} \mathbb{P}(\xi_t(0) \neq 0)>0$ then with positive probability, the origin is active at arbitrarily late times, and since each cluster is bounded the origin is active as a result of activity from an infinite number of sites.  Applying the same argument as in the proof shows that this is impossible.  This fact generalizes to $\mathbb{Z}^d$ for $d>1$, if one sets $|C_x|$ equal to the maximum cross-sectional area (in $d=2$) or volume (in $d\geq 3$) of the cluster.  If one can again show that $\mathbb{E}|C_x|<\infty$ implies $\lambda<\lambda_c$, then question 1 has been answered in the affirmative.\\
\end{comment}
\bibliography{twoStage}

\begin{thebibliography}{10}

\bibitem{add}
C.~Bezuidenhout and L.~Gray.
\newblock Critical attractive spin systems.
\newblock {\em Annals of Probability}, 23(3):1160--1194, 1994.

\bibitem{crit}
C.~Bezuidenhout and G.~Grimmett.
\newblock The critical contact process dies out.
\newblock {\em Annals of Probability}, 18(4):1462--1482, 1990.

\bibitem{subcrit}
C.~Bezuidenhout and G.~Grimmett.
\newblock Exponential decay for subcritical contact and percolation processes.
\newblock {\em Annals of Applied Probability}, 19(3):984--1009, 1991.

\bibitem{speed}
R.~Durrett.
\newblock On the growth of one-dimensional contact processes.
\newblock {\em Annals of Probability}, 8(5):890--907, 1980.

\bibitem{growth}
R.~Durrett and R.H. Schonmann.
\newblock Stochastic growth models.
\newblock {\em Percolation Theory and Ergodic Theory of Infinite Particle
  Systems}, 8:85--119, 1987.

\bibitem{limgriff}
D.~Griffeath.
\newblock Limit theorems for nonergodic set-valued markov processes.
\newblock {\em The Annals of Probability}, 6(3):379.

\bibitem{perc}
G.~Grimmett.
\newblock {\em Percolation}.
\newblock Springer, second edition, 1999.

\bibitem{harris}
T.E. Harris.
\newblock Contact interactions on a lattice.
\newblock {\em The Annals of Probability}, 2(6):969--988, 1974.

\bibitem{krone}
S.~Krone.
\newblock The two-stage contact process.
\newblock {\em Annals of Applied Probability}, 9(2):331--351, 1999.

\bibitem{ips}
T.M. Liggett.
\newblock {\em Interacting Particle Systems}.
\newblock Springer, 1985.

\bibitem{sis}
T.M. Liggett.
\newblock {\em Stochastic Interacting Systems: Contact, Voter and Exclusion
  Processes}.
\newblock Springer, 1999.

\bibitem{ziezold}
H.~Ziezold and C.~Grillenberger.
\newblock On the critical infection rate of the one-dimensional basic contact
  process: numerical results.
\newblock {\em Journal of Applied Probability}, 25:1--8, 1988.

\end{thebibliography}
\bibliographystyle{plain}
\end{document}